\documentclass[10pt]{article}

\usepackage{amstext}
\usepackage{amssymb}
\usepackage{amsmath}
\usepackage{amsthm}
\usepackage{amsfonts,amsthm,latexsym,amsmath,amssymb,amscd,amsmath,epsf}
\usepackage{fullpage}
\usepackage[dvips]{graphicx}        
\graphicspath{{./pict/}}            

\usepackage[cp1251]{inputenc}
\usepackage[russian,english]{babel}

\setcounter{page}{1} \setcounter{MaxMatrixCols}{20}

\theoremstyle{plain}
\newtheorem{theorem}{Theorem}

\theoremstyle{definition}

\setlength{\hoffset}{.35in}
\setlength{\textwidth}{5.4in}
\setlength{\voffset}{.25in}
\setlength{\textheight}{9.25in}

\hfuzz6pt


\renewcommand{\Im}{\operatorname{Im}}
\renewcommand{\Re}{\operatorname{Re}}

\begin{document}
\vspace{\baselineskip} \thispagestyle{empty}

\begin{center}
\Large{On the theory of symmetric polynomials\footnote{The original paper was published in \emph{Math. Sb., vol. 40, no. 3, 1933, pp. 271-283}. Translated from the Russian original by Mikhail Tyaglov.}}\\

\vspace{10mm}

\small{by Mark Krein (Odessa)}\\
\end{center}

\vspace{10mm}

\setcounter{equation}{0}
\section*{Introduction}

If $g(x,y)$ is a finite or infinite expression depending on $x$ and $y$
\begin{equation*}
g(x,y)=\sum_{i,k=0}^{m}g_{ik}x^iy^k\qquad\quad(g_{ik}=\overline{g}_{ik},\ m\leqslant\infty),
\end{equation*}
then by the symbol
\begin{equation*}
[g(x,y)]_n
\end{equation*}
we denote the Hermitian form
\begin{equation*}
\sum_{i,k=0}^{n-1}g_{ik}x_i\overline{x}_k
\end{equation*}
(where the bar denotes the complex conjugation).

As is known, due to the works by Schur~\cite{Schur}, Cohn~\cite{Cohn}, and due to additional notice by Fujiwara~\cite{Fujiwara},
the following theorem was established.

\emph{If the Hermitian form}\footnote{Here $\overline{f}(x)$ denotes the expression whose coefficients are complex conjugate to ones of~$f(x)$.}
\begin{equation*}
\begin{array}{c}
\displaystyle\mathfrak{H}[g;x_0,x_1,x_2,\ldots,x_{n-1}]=\left[\dfrac{g^{*}(x)\overline{g^{*}}(y)-g(x)\overline{g}(y)}{1-xy}\right]_n=
\\
=\displaystyle\sum_{\lambda=0}^{n-1}\left|\overline{a}_nx_{\lambda}+\overline{a}_{n-1}x_{\lambda+1}+\ldots+\overline{a}_{\lambda}x_{n}\right|^2-
\sum_{\lambda=0}^{n-1}\left|a_0x_{\lambda}+a_{1}x_{\lambda+1}+\ldots+a_{n-\lambda}x_{n}\right|^2
\end{array}
\end{equation*}
\emph{constructed from the polynomials}
\begin{equation*}
g(x)=a_0+a_1x+\ldots+a_nx^n,
\end{equation*}
\begin{equation*}
g^{*}(x)=x^n\overline{g}\left(\dfrac1{x}\right)=\overline{a}_n+\overline{a}_{n-1}x+\ldots+\overline{a}_0x^n,
\end{equation*}
\emph{has $\pi$ positive and $\nu$ negative squared terms, and the dimension of its kernel is $d$ $(\pi+\nu+d=n)$, then the polynomials $g(x)$ and $g^{*}(x)$ have
the greatest common divisor $D(x)$ of degree~$d$, and the polynomial $\dfrac{g(x)}{D(x)}$ has $\pi$ roots inside the circle $|x|=1$
and $\nu$ roots outside it.}

Therefore, to count exactly the number of roots of $g(x)$ inside the circle $|x|=1$, we should be able to count the number of roots of $D(x)$ inside this
circle.

At the same time, the polynomial $D(x)$ with the proper normalization by a constant factor is a symmetric polynomial, that is,
\begin{equation*}
D(x)=D^{*}(x)=x^n\overline{D}\left(\dfrac1x\right).
\end{equation*}

Meanwhile, for symmetric polynomials A.\,Cohn established the following theorem.

\vspace{2mm}
\textbf{Cohn's Theorem.} \textit{The number of roots of a symmetric polynomial inside the circle $|x|=1$ equals the number of roots of $g'(x)$ outside this circle.}
\vspace{2mm}

But the derivative $g'(x)$ of a symmetric polynomials is not a symmetric polynomial anymore, so one can apply Schur-Cohn's theorem
to $g'(x)$. Thus, this theorem together with Cohn's theorem completely solves the posed problem.

A.\,Cohn proved his theorem with the help of Rouch\'e's theorem and a number of laborious considerations ``by continuity''.

While Schur-Cohn's theorem can be established by the pure algebraic technique of Li\'enard and Chipart~\cite{Lienard_Chipart} (see also Fujiwara~\cite{Fujiwara}),
we do not know whether the same
was done for Cohn's theorem.

In Section~\ref{section.1} of this note, we construct two quadratic forms which allow us to count the number of roots of a symmetric polynomial
inside the circle $|x|=1$. Comparing one of those forms with the form $\mathfrak{H}$, we obtain Cohn's theorem and also
its generalization (see Theorem~\ref{Theorem.3}) pure algebraically.

In Section~\ref{section.2}, we give a criterion for two symmetric polynomial to have interlacing roots on the circle $|x|=1$. We also prove
a theorem analogous to V.\,Markov's theorem~\cite{MarkovV} (see also~\cite{Grave}) and a series of some other statements.

In Section~\ref{section.3}, we discuss analogies between symmetric and real polynomials.

\section{ }\label{section.1}

Let us denote by $s_k$ $(k=0,\pm1,\pm2,\ldots)$ the sum of $k^{\text{th}}$ powers of roots of a polynomial~$g(x)$.

It is easy to see that a polynomial $g(x)$ is symmetric or differs from symmetric by a constant factor if, and only if,
together with a root $\alpha$, $|\alpha|\neq1$, the polynomial $g(x)$ has the root $\alpha^{*}=\dfrac1{\overline{\alpha}}$ of
the same multiplicity as the root~$\alpha$.

This implies that $s_{-k}=\overline{s}_{k}$ $(k=0,1,2,\ldots)$ for every symmetric polynomial. Indeed, let $\varepsilon_1$, $\varepsilon_2$, \ldots,
$\varepsilon_p$ be all distinct roots of $g(x)$ with absolute value equal to $1$. Let their multiplicities equal $\rho_1$, $\rho_2$, \ldots,
$\rho_p$, respectively. Furthermore, let $\beta_1$, $\beta^{\,*}_1$, $\beta_2$, $\beta^{\,*}_2$, \ldots, $\beta_q$, $\beta^{\,*}_q$ be all
the distinct pairs of roots symmetric w.r.t. the circle $|x|=1$ $\left(\beta^{*}_i=\dfrac1{\overline{\beta}_i}\right)$ with correspondent
multiplicities $\sigma_1$, $\sigma_2$, \ldots, $\sigma_q$.

So for $s_k$ we have the expression
\begin{equation}\label{s_k}
s_k=\sum_{s=1}^p\rho_s\varepsilon_s^k+\sum_{t=1}^q\sigma_t\left(\beta_t^{\,k}+\beta^{\,*k}_t\right).
\end{equation}
On the other hand, $\overline{\varepsilon}_s^{\,k}=\varepsilon_s^{-k}$, $\overline{\beta}_t^{\,k}=\beta_t^{\,*-k}$ $(s=1,2,\ldots,p;t=1,2,\ldots,q)$. Therefore,
in fact,
\begin{equation*}
s_{-k}=\overline{s}_k.
\end{equation*}

Now we prove the following theorem.
\begin{theorem}\label{Theorem.1}
If the Hermitian form
\begin{equation*}
\mathfrak{S}=\sum_{i,k=0}^{n-1}s_{i-k}x_i\overline{x}_k,
\end{equation*}
constructed for a given symmetric polynomial $g(x)$ has $\pi$ positive squared terms and $\nu$ negative squared terms, then the polynomial
$g(x)$ has $\pi-\nu$ distinct roots $\varepsilon_i$ with absolute values equal to $1$ and $\nu$ distinct pairs $\beta_i$, $\beta_i^{*}$
symmetric w. r. t. the circle $|x|=1$.
\end{theorem}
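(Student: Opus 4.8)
The plan is to diagonalize the Toeplitz-type Hermitian form $\mathfrak{S}$ explicitly by means of the representation \reff{s_k} of the power sums, and then to read off its inertia indices from a sum-of-squares decomposition whose summands are linearly independent. For $\alpha\in\mathbb{C}$ I introduce the linear form $L_\alpha=\sum_{i=0}^{n-1}\alpha^i x_i$. Inserting \reff{s_k} (together with $s_{-k}=\overline{s}_k$, which also guarantees that $\mathfrak{S}$ is Hermitian) into $\mathfrak{S}$ and using linearity in the coefficients $s_{i-k}$, I would separate the contribution of the roots $\varepsilon_s$ on the circle from that of the symmetric pairs $\beta_t,\beta_t^{*}$.

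For a unit-circle root the relation $\overline{\varepsilon}_s=\varepsilon_s^{-1}$ gives
\begin{equation*}
\sum_{i,k=0}^{n-1}\varepsilon_s^{\,i-k}x_i\overline{x}_k=\Big(\sum_{i=0}^{n-1}\varepsilon_s^{\,i}x_i\Big)\overline{\Big(\sum_{k=0}^{n-1}\varepsilon_s^{\,k}x_k\Big)}=|L_{\varepsilon_s}|^2 ,
\end{equation*}
a positive square carrying the positive weight $\rho_s$. For a symmetric pair, writing $\beta_t^{*}=1/\overline{\beta}_t$ I would obtain
\begin{equation*}
\sum_{i,k=0}^{n-1}\big(\beta_t^{\,i-k}+\beta_t^{*\,(i-k)}\big)x_i\overline{x}_k=L_{\beta_t}\,\overline{L_{\beta_t^{*}}}+L_{\beta_t^{*}}\,\overline{L_{\beta_t}}=2\Re\big(L_{\beta_t}\,\overline{L_{\beta_t^{*}}}\big),
\end{equation*}
and the elementary identity $2\Re(u\overline{v})=\tfrac12\big(|u+v|^2-|u-v|^2\big)$ turns this into one positive and one negative square, again with positive factor $\sigma_t$. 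Collecting everything, $\mathfrak{S}$ appears as a combination of the $p+q$ positive squares $|L_{\varepsilon_s}|^2$ and $|L_{\beta_t}+L_{\beta_t^{*}}|^2$ and the $q$ negative squares $|L_{\beta_t}-L_{\beta_t^{*}}|^2$.

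The decisive point, which I expect to be the main obstacle, is to show that these squares are honestly independent, so that Sylvester's law of inertia applies and no cancellation can lower the counts. Since the passage from $L_{\beta_t},L_{\beta_t^{*}}$ to $L_{\beta_t}\pm L_{\beta_t^{*}}$ is an invertible block-diagonal change of basis, it suffices to prove that the forms $L_{\varepsilon_1},\dots,L_{\varepsilon_p},L_{\beta_1},L_{\beta_1^{*}},\dots,L_{\beta_q},L_{\beta_q^{*}}$ are linearly independent. Their coefficient vectors are the rows $(1,\alpha,\alpha^2,\dots,\alpha^{n-1})$ of a Vandermonde matrix whose nodes are the $p+2q$ pairwise distinct roots $\varepsilon_s,\beta_t,\beta_t^{*}$ of $g$; as $p+2q\le n$, this matrix has full row rank, whence the independence.

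By the law of inertia the exhibited decomposition then furnishes exactly $\pi=p+q$ and $\nu=q$. Consequently $\nu=q$ equals the number of distinct symmetric pairs, while $\pi-\nu=p$ equals the number of distinct roots of $g$ on the circle $|x|=1$, which is the assertion of the theorem.
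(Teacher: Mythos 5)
Your proof is correct and follows essentially the same route as the paper: the same substitution of the representation \reff{s_k} into $\mathfrak{S}$, the same splitting of the circle roots from the symmetric pairs, and the same identity turning each cross term $L_{\beta_t}\overline{L_{\beta_t^{*}}}+L_{\beta_t^{*}}\overline{L_{\beta_t}}$ into one positive and one negative square (the paper's $U_t,V_t$). The only difference is that you spell out, via the Vandermonde rank argument, the linear independence of the squared forms that the paper simply asserts when invoking the law of inertia — a worthwhile detail, but not a different proof.
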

\noindent\textit{Remark.} Thus, Theorem~\ref{Theorem.1} implies that the greatest common divisor $D(x)$ of polynomials $g(x)$ and $g'(x)$ has degree $d$ which is equal to
dimension~$d$ $(d\geqslant0)$ of the kernel of the form~$\mathfrak{S}$ $(\pi+\nu+d=n)$.

\begin{proof}
According to the formula~\eqref{s_k}, we obviously have
\begin{equation*}
s_{i-k}=\sum_{s=1}^p\rho_s\varepsilon_s^{i-k}+\sum_{t=1}^q\sigma_t\left[\beta_t^{i-k}+\beta^{*i-k}_t\right]=
\sum_{s=1}^p\rho_s\varepsilon_s^{i}\overline{\varepsilon}_s^{\,k}+\sum_{t=1}^q\sigma_t\left[\beta_t^{i}\overline{\beta_t^{\,*k}}+\overline{\beta_t^{\,k}}\beta^{*i}_t\right].
\end{equation*}
Hence
\begin{equation*}
\begin{array}{c}
\displaystyle\mathfrak{S}=\sum s_{i-k}x_i\overline{x}_k=\sum_{s=1}^p\rho_s|x_0+x_1\varepsilon_s+x_2\varepsilon_s^2+\ldots+x_{n-1}\varepsilon_s^{n-1}|^2+\\
\displaystyle+\sum_{t=1}^q\sigma_t(x_0+x_1\beta_t+x_2\beta_t^{\,2}+\ldots+x_{n-1}\beta_t^{\,n-1})(\overline{x}_0+\overline{x}_1\overline{\beta}_t^{\,*}+\ldots+\overline{x}_{n-1}\overline{\beta}_t^{\,*n-1})+\\
\displaystyle+\sum_{t=1}^q\sigma_t(x_0+x_1\beta_t^{\,*}+\ldots+x_{n-1}\beta_t^{\,*n-1})(\overline{x}_0+\overline{x}_1\overline{\beta}_t+\ldots+\overline{x}_{n-1}\overline{\beta}_t^{\ n-1})
\end{array}
\end{equation*}
Putting now
\begin{equation*}
\begin{array}{l}
\,\ X_s=x_0+x_1\varepsilon_s+\ldots+x_{n-1}\varepsilon_s^{n-1}\qquad \quad\ \, (s=1,2,\ldots,p),\\
\left.
\begin{array}{l}
Y_t=x_0+x_1\beta_t+\ldots+x_{n-1}\beta_t^{\,n-1},\\
Z_t=x_0+x_1\beta_t^{\,*}+\ldots+x_{n-1}\beta_t^{\,*n-1},
\end{array}\right\}\quad(t=1,2,\ldots,q),
\end{array}
\end{equation*}
we obtain
\begin{equation*}
\begin{array}{c}
\displaystyle\mathfrak{S}=\sum_{s=1}^p\rho_s|X_s|^2+\sum_{t=1}^q\sigma_t(Y_t\overline{Z}_t+Z_t\overline{Y}_t)=\\
\displaystyle=\sum_{s=1}^p\rho_s|X_s|^2+\sum_{t=1}^q2\sigma_t|U_t|^2-\sum_{t=1}^q2\sigma_t|V_t|^2,
\end{array}
\end{equation*}
where
\begin{equation*}
U_t=\dfrac{Y_t+Z_t}2,\quad V_t=\dfrac{Y_t-Z_t}2.
\end{equation*}

From the last representation of the form $\mathfrak{S}$ as the sum of independent positive and negative squared terms, we deduce that
$\nu=q$ and $\pi=p+q$, so $p=\pi-\nu$, as required.
\end{proof}

In addition to the form $\mathfrak{S}$, one can also construct another form that allows us to count the number of roots of the symmetric polynomial
$g(x)$ inside the circle $|x|=1$.

In order to do this, let us consider the function
\begin{equation*}
F(x)=\dfrac n2-\dfrac{xg'(x)}{g(x)}=\dfrac{g_{\delta}(x)}{g(x)},
\end{equation*}
where, thus, in what follows, by $g_{\delta}(x)$ we denote the polynomial
\begin{equation*}
g_{\delta}(x)=\dfrac n2g(x)-xg'(x).
\end{equation*}
It is easy to see that
\begin{equation*}
F(x)=\dfrac{s_0}2+\sum_{i=1}^{+\infty}s_{-i}x^i.
\end{equation*}

As well, it is not difficult to check the identity
\begin{equation*}
G(x,y)=\dfrac{F(x)+\overline{F}(y)}{1-xy}=\sum_{i,k=1}^{+\infty}s_{i-k}y^ix^k,
\end{equation*}
from which, by multiplying both sides of this identity by $g(x)\overline{g}(y)$, we find that
\begin{equation}\label{(2)}
\begin{array}{c}
\displaystyle\dfrac{g(x)\overline{g}_{\delta}(y)+g_{\delta}(x)\overline{g}(y)}{1-xy}=g(x)\overline{g}(y)G(x,y)=\\
\displaystyle=\sum_{i,k=1}^{+\infty}s_{i-k}(a_0x^k+a_1x^{k+1}+\ldots+a_nx^{n+k})(\overline{a}_0y^i+\overline{a}_1y^{i+1}+\ldots+\overline{a}_ny^{n+i}).
\end{array}
\end{equation}

But if $g(x)=a_0+a_1x+\ldots+a_nx^n$ is a symmetric polynomial $(a_k=\overline{a}_{n-k})$, then the polynomial~$g_{\delta}(x)$ is skew-symmetric, that is, the following holds
\begin{equation*}
g^{*}_{\delta}(x)=x^n\overline{g}_{\delta}\left(\dfrac1x\right)=-g_{\delta}(x).
\end{equation*}
Due to this property the left part of~\eqref{(2)} is a polynomial of $x$ and $y$. So, let
\begin{equation}\label{(3)}
\dfrac{g(x)\overline{g}_{\delta}(y)+g_{\delta}(x)\overline{g}(y)}{1-xy}=\sum_{i,k=0}^{n-1}a_{ik}x^iy^k.
\end{equation}
Obviously, the identity~\eqref{(2)} remains the same if we change $x^i$ to $x_i$ and $y^k$ to~$\overline{x}_k$.
Then according to~\eqref{(3)}, we obtain
\begin{equation}\label{(4)}
\sum_{i,k=0}^{+\infty}s_{i-k}z_k\overline{z}_i=\sum_{i,k=0}^{n-1}a_{ik}x_i\overline{x}_k
\end{equation}
if we set
\begin{equation*}
z_k=a_0x_k+a_1x_{k+1}+\ldots+a_nx_{n+k}\qquad (k=0,1,2,\ldots,\infty).
\end{equation*}

Now putting $x_n=x_{n+1}=\ldots=0$ in~\eqref{(4)} we get
%
$$
\sum_{i,k=0}^{n-1}s_{i-k}\overline{z}_iz_k=\sum_{i,k=0}^{n-1}a_{ik}x_i\overline{x}_k,\eqno(4a)
$$
%
where
\begin{equation*}
\begin{array}{c}
z_0\ \ \ =\ a_0x_0+a_1x_{1}+\ldots+a_{n-1}x_{n-1},\\
z_1\ \ \ =\ \,\quad\qquad a_1x_{0}+\ldots+a_{n-2}x_{n-1},\\
\cdots\cdots\cdots\cdots\cdots\cdots\cdots\cdots\cdots\cdots\cdots\cdots\cdots\\
z_{n-1}=\ \,\,\,\,\quad\qquad\qquad\qquad\qquad a_0x_{n-1}.
\end{array}
\end{equation*}

Since for the symmetric polynomial $g(x)$, we have $a_0=\overline{a}_n\neq0$, the transformation above is non-singular.

Thus, the identity~$(4\mathrm{a})$ shows that the form $\mathfrak{S}$ in Theorem~\ref{Theorem.1} can be changed by
the form
\begin{equation*}
\mathfrak{K}=\sum a_{ik}x_i\overline{x}_k
\end{equation*}

So, the following theorem holds.
\begin{theorem}\label{Theorem.2}
If the Hermitian form
\begin{equation*}
\mathfrak{K}[g;x_0,x_1,\ldots,x_{n-1}]=\left[\dfrac{g(x)\overline{g}_{\delta}(y)+g_{\delta}(x)\overline{g}(y)}{1-xy}\right]_n,
\end{equation*}
constructed for a given symmetric polynomial $g(x)$ has $\pi$ positive squared terms and $\nu$~negative squared terms, then the polynomial
$g(x)$ has $\pi-\nu$ distinct roots $\varepsilon_i$ with absolute values equal to $1$ and $\nu$ distinct pairs $\beta_i$, $\beta_i^{*}$
symmetric w. r. t. the circle $|x|=1$.
\end{theorem}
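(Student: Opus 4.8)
The plan is to reduce Theorem~\ref{Theorem.2} to Theorem~\ref{Theorem.1}, which is already established.  The key observation is that identity~$(4\mathrm{a})$ exhibits the form $\mathfrak{K}=\sum a_{ik}x_i\overline{x}_k$ as the result of applying to the form $\mathfrak{S}=\sum s_{i-k}\overline{z}_iz_k$ the linear substitution $z_k=a_0x_k+a_1x_{k+1}+\ldots+a_{n-1-k}x_{n-1}$ (the triangular system displayed after $(4\mathrm{a})$).  Since $\mathfrak{S}$ and $\mathfrak{K}$ represent the same Hermitian form in different coordinates, they share the same signature and the same rank provided the substitution is invertible.

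First I would verify the two identities that underlie the whole argument.  The expansion $F(x)=\tfrac{s_0}{2}+\sum_{i\geqslant1}s_{-i}x^i$ follows from the definition $F(x)=\tfrac n2-xg'(x)/g(x)$ together with the Newton-type power-sum expansion of $xg'(x)/g(x)$; using $s_{-i}=\overline{s}_i$ for a symmetric polynomial then yields the stated coefficients.  From this I would confirm the kernel identity $G(x,y)=\dfrac{F(x)+\overline{F}(y)}{1-xy}=\sum_{i,k\geqslant1}s_{i-k}y^ix^k$ by expanding $(1-xy)^{-1}=\sum_{m\geqslant0}(xy)^m$ and collecting the coefficient of $y^ix^k$, checking that the cross terms combine to give exactly $s_{i-k}$.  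Multiplying through by $g(x)\overline{g}(y)$ produces~\eqref{(2)}; the crucial point is that, because $g$ is symmetric, $g_{\delta}$ is skew-symmetric ($g_{\delta}^{*}=-g_{\delta}$), which forces the left-hand side of~\eqref{(2)} to be a genuine \emph{polynomial} in $x,y$ of degree at most $n-1$ in each variable, justifying the finite expansion~\eqref{(3)}.

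Next I would carry out the coordinate substitution.  Replacing $x^i$ by $x_i$ and $y^k$ by $\overline{x}_k$ turns~\eqref{(2)} into~\eqref{(4)}, and truncating by setting $x_n=x_{n+1}=\cdots=0$ gives~$(4\mathrm{a})$.  The substitution matrix is upper-triangular with every diagonal entry equal to $a_0$; since for a symmetric polynomial $a_0=\overline{a}_n\neq0$, its determinant is $a_0^{\,n}\neq0$, so the transformation $x\mapsto z$ is non-singular.  By Sylvester's law of inertia a non-singular linear change of variables preserves the numbers $\pi$ of positive and $\nu$ of negative squares (and hence the kernel dimension) of a Hermitian form.  Therefore $\mathfrak{K}$ has exactly the same inertia indices $(\pi,\nu)$ as $\mathfrak{S}$.

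Finally I would invoke Theorem~\ref{Theorem.1}: since $\mathfrak{S}$ has $\pi$ positive and $\nu$ negative squares precisely when $g(x)$ has $\pi-\nu$ distinct unimodular roots and $\nu$ distinct symmetric pairs, the equality of inertia forces the identical conclusion for $\mathfrak{K}$.  The main obstacle I anticipate is not the inertia bookkeeping but the verification of the generating-function identity for $G(x,y)$ and, above all, confirming the skew-symmetry $g_{\delta}^{*}=-g_{\delta}$, since this is exactly what guarantees that the quotient in~\eqref{(2)} is polynomial and that the truncation in~$(4\mathrm{a})$ is legitimate; a short computation with $g^{*}=g$ and the definition $g_{\delta}=\tfrac n2 g-xg'$ should settle it.
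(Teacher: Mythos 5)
Your proposal is correct and follows essentially the same route as the paper: the identities \eqref{(2)}--\eqref{(4)} and $(4\mathrm{a})$, the skew-symmetry $g_{\delta}^{*}=-g_{\delta}$ guaranteeing the polynomial character of the quotient, and the non-singularity of the triangular substitution (diagonal entries $a_0=\overline{a}_n\neq0$) are exactly how the paper reduces Theorem~\ref{Theorem.2} to Theorem~\ref{Theorem.1}. The only point the paper leaves implicit and you state explicitly is the appeal to Sylvester's law of inertia, which is a welcome clarification rather than a deviation.
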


This theorem can also be proved by the method of Li\'enard and Chipart~\cite{Lienard_Chipart} taking into account the fact that
if $g=g_1g_2$, then $g_{\delta}=g_{1\delta}g_2+g_1g_{2\delta}$, but here we will not go into details.

Now we prove the following theorem.
\begin{theorem}\label{Theorem.3}
If $g(x)$ is a symmetric polynomial, then for $\Re z>0$, the polynomial
\begin{equation*}
f(x)=g_{\delta}(x)-zg(x),
\end{equation*}
has as many roots outside the circle $|x|=1$ as many roots of the polynomial $g(x)$ lie inside or outside this circle.
\end{theorem}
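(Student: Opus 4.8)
The plan is to apply the Schur--Cohn theorem (the form $\mathfrak{H}$ of the introduction) directly to $f$ and to recognize the resulting form as a positive multiple of the form $\mathfrak{K}$ of Theorem~\ref{Theorem.2}. First I would check that $\deg f=n$: the leading coefficient of $f$ equals $-(\tfrac n2+z)a_n$, which is nonzero whenever $\Re z>0$, so the $*$-operation for $f$ uses the same $n$ as for $g$ and $g_\delta$. Using that $g$ is symmetric ($g^{*}=g$), that $g_\delta$ is skew-symmetric ($g_\delta^{*}=-g_\delta$), and that the $*$-operation is conjugate-linear in the scalars, I would compute $f^{*}=g_\delta^{*}-\overline{z}\,g^{*}=-g_\delta-\overline{z}\,g$.

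The key step is the identity $\mathfrak{H}[f]=2\Re(z)\,\mathfrak{K}$. Writing $A=g_\delta(x)$, $B=g(x)$, $\widetilde A=\overline{g}_\delta(y)$, $\widetilde B=\overline{g}(y)$, a direct expansion gives $f^{*}(x)\overline{f^{*}}(y)-f(x)\overline{f}(y)=(z+\overline z)(A\widetilde B+\widetilde A B)=2\Re(z)\bigl(g(x)\overline{g}_\delta(y)+g_\delta(x)\overline{g}(y)\bigr)$, which is exactly $2\Re(z)$ times the numerator defining $\mathfrak{K}$. Since $2\Re(z)>0$, the forms $\mathfrak{H}[f]$ and $\mathfrak{K}$ have the same signature $(\pi,\nu)$ and the same kernel dimension $d$. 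By Theorem~\ref{Theorem.2} this common value $\nu$ equals the number $q$ of distinct symmetric pairs of $g$, and the Schur--Cohn theorem applied to $f$ then gives that $f/D_f$ has exactly $\nu$ roots outside $|x|=1$, where $D_f=\gcd(f,f^{*})$ has degree $d$.

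Next I would identify $D_f$ explicitly, so that the Schur--Cohn count for the reduced polynomial $f/D_f$ can be promoted to a count for $f$ itself. From $f+f^{*}=-(z+\overline z)g=-2\Re(z)\,g$ together with $\Re z>0$ one sees that $g$, and hence $g_\delta=f+zg$, lie in the ideal generated by $f$ and $f^{*}$; the reverse inclusion is immediate, so $\gcd(f,f^{*})=\gcd(g,g_\delta)$ up to a constant. Because $g$ is symmetric, $g(0)=a_0=\overline{a}_n\neq0$, whence $\gcd(g,g_\delta)=\gcd\bigl(g,\tfrac n2 g-xg'\bigr)=\gcd(g,xg')=\gcd(g,g')$. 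Thus $D_f$ is precisely the divisor collecting the multiple roots of $g$, and in particular it is symmetric.

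Finally I would assemble the count with multiplicities. The roots of $f$ outside $|x|=1$ split into the $\nu$ roots of $f/D_f$ outside the circle and the roots of $D_f$ outside. For each distinct symmetric pair $\beta_t,\beta_t^{*}$ of multiplicity $\sigma_t$, with (say) $\beta_t^{*}$ outside, the factor $D_f=\gcd(g,g')$ contains $\beta_t^{*}$ with multiplicity $\sigma_t-1$, so $D_f$ has $\sum_{t=1}^{q}(\sigma_t-1)=N_{\mathrm{out}}-\nu$ roots outside, where $N_{\mathrm{out}}=\sum_t\sigma_t$ is the number of roots of $g$ outside the circle counted with multiplicity (the circle roots $\varepsilon_s$ contribute only to $D_f$ on the circle and so are irrelevant here). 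Adding, $f$ has $\nu+(N_{\mathrm{out}}-\nu)=N_{\mathrm{out}}$ roots outside, and by the symmetry of $g$ we have $N_{\mathrm{out}}=N_{\mathrm{in}}$, which is the assertion. I expect this last bookkeeping to be the main obstacle: it is what forces one to pin down $\gcd(f,f^{*})$ as an actual polynomial rather than merely its zero set, and to track the multiplicities of $g$ through $\gcd(g,g')$, rather than stopping at the clean but weaker count $\nu$ produced by Schur--Cohn for $f/D_f$.
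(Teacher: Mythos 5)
Your proposal is correct and follows essentially the same route as the paper: the identity $\mathfrak{H}[f]=2\Re(z)\,\mathfrak{K}[g]$, the identification $\gcd(f,f^{*})=\gcd(g,g_{\delta})=\gcd(g,g')$, and the combination of the Schur--Cohn theorem with Theorem~\ref{Theorem.2} constitute exactly the paper's argument. The only difference is that you carry out explicitly the final multiplicity bookkeeping (promoting the count from $f/D$ and $g/D$ to $f$ and $g$ itself, using that $D$ is symmetric and divides $f$), a step the paper leaves implicit.
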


Before we prove this theorem, let us note that A.\,Cohn's theorem is a consequence of this theorem corresponding to $z=\dfrac n2$, since in this case,
\begin{equation*}
f(x)=-xg'(x).
\end{equation*}
\begin{proof}
Let us construct Schur-Cohn's form
\begin{equation*}
\mathfrak{H}[f;x_0,x_1,\ldots,x_{n-1}]=\left[\dfrac{f^{*}(x)\overline{f^{*}}(y)-f(x)\overline{f}(y)}{1-xy}\right]_n
\end{equation*}
for the polynomial $f(x)$. Since
\begin{equation}\label{(5)}
\left.\begin{array}{c}
f(x)=g_{\delta}(x)-zg(x),\\
f^{*}(x)=-g_{\delta}(x)-\overline{z}g(x),
\end{array}\right\}
\end{equation}
after a simple calculation we find that
\begin{equation}\label{(6)}
\mathfrak{H}[f;x_0,x_1,\ldots,x_{n-1}]=2\xi\mathfrak{K}[g;x_0,x_1,\ldots,x_{n-1}],
\end{equation}
where $\xi+i\eta=z$. By~\eqref{(5)} the greatest common divisor $D(x)$ of the polynomials $f$ and $f^{*}$
is the greatest common divisor of the polynomials $g(x)$ and $g_{\delta}(x)$ and, consequently, of the polynomials $g(x)$ and $g'(x)$.
Now the identity~\eqref{(6)} establishes our theorem.

Indeed, by Schur-Cohn's theorem the number $\nu_{\mathfrak{H}}$ of negative squared terms of the form~$\mathfrak{H}$
equals the number of roots of $\dfrac{f}{D}$ outside the unit disk. But according to the identity~\eqref{(6)}, for $\xi=\Re z>0$, this number
equals $\nu_{\mathfrak{K}}$, so by Theorem~\ref{Theorem.2} it equals the number of roots of $\dfrac{g}{D}$ inside (or outside) the circle $|x|=1$.
\end{proof}

It can be analogously proved that if $\xi=\Re z<0$, then the polynomial
\begin{equation*}
f(x)=g_{\delta}(x)-zg(x)
\end{equation*}
has the same number of roots inside the unit disk as the polynomial $g(x)$ does. These conclusions have also the following curious
interpretation.

\textit{If $g(z)$ is a symmetric polynomial, then the function
\begin{equation*}
z=\dfrac{g_{\delta}(x)}{g(x)}
\end{equation*}
maps the disk $|x|<1$ into a domain consisting of $k$ sheets with $\Re z<0$ and $n-k$ sheets with $\Re z>0$, where $k$ is the number
of poles of the function $z$ in the disk $|x|<1$.}

We conclude this Section noticing that it is not difficult to separate a positive squared term from the form $\mathfrak{K}$,
that is, it is easy to check that
\begin{equation*}
\mathfrak{K}[g;x_0,x_1,\ldots,x_{n-1}]=\dfrac1n\mathfrak{H}[g';x_0,x_1,\ldots,x_{n-1}]+\dfrac1n[g'(x)\overline{g}'(y)]_n.
\end{equation*}
This again implies A.\,Cohn's theorem.

\section{ }\label{section.2}

Let us correspond to every symmetric polynomial $g(x)$ the following $2\pi$-periodic
function
\begin{equation*}
G(\varphi)=e^{-\tfrac{ni\varphi}2}g\left(e^{i\varphi}\right).
\end{equation*}
Then it is easy to see that
\begin{equation*}
G'(\varphi)=-ie^{-\tfrac{ni\varphi}2}g_{\delta}\left(e^{i\varphi}\right).\eqno(\mathrm{A})
\end{equation*}

We also agree that two symmetric polynomials $g(x)$ and $h(x)$ have \textit{interlacing} roots
if all their roots are simple, lie on the circle $|x|=1$, and between any two consecutive roots of
one polynomial there lies one, and only one, root of the second polynomial.

It is evident that polynomials $g(x)$ and $h(x)$ have interlacing roots if, and only if,
$g(x)$ and $h(x)$ are of the same degree, and the function
\begin{equation*}
H(\varphi)=e^{-\tfrac{ni\varphi}2}h\left(e^{i\varphi}\right)
\end{equation*}
has different signs at any two consecutive roots of $g(x)$.

If all the roots of the symmetric polynomial $g(x)$ are simple and lie on the circle $|x|=1$, then by Rolle's theorem,
the formula $(\mathrm{A})$ implies that the roots of $g_{\delta}(x)$ interlace the roots of $g(x)$.

Furthermore, this implies that the roots of the polynomial $h(x)$ interlace the roots of such a polynomial $g(x)$
if, and only if, the ratio
\begin{equation*}
\dfrac{H(\varphi)}{G'(\varphi)}=i\dfrac{h(\varphi)}{g_{\delta}(\varphi)}\qquad (x=e^{i\varphi})
\end{equation*}
is of the same sign at all roots of the polynomial $g(x)$. This remark allows us to prove the following theorem.
\begin{theorem}\label{Theorem.4}
Two symmetric polynomials $g(x)$ and $h(x)$ have interlacing roots if and only if the form
\begin{equation*}
\mathfrak{K}[g,h;x_0,x_1,\ldots,x_{n-1}]=\left[i\dfrac{g(x)\overline{h}(y)-\overline{g}(y)h(x)}{1-xy}\right]_n
\end{equation*}
is sign-definite.
\end{theorem}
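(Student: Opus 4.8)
The plan is to diagonalize the Hermitian form $\mathfrak{K}[g,h]$ directly over the roots of $g$, exactly as $\mathfrak{S}$ was handled in the proof of Theorem~\ref{Theorem.1}, and then read off its signature from the values of $h$ at those roots. Throughout, $g$ and $h$ are symmetric of degree $n$. Write the kernel as $K(x,y)=i\dfrac{g(x)\overline h(y)-\overline g(y)h(x)}{1-xy}$, so that $\mathfrak{K}[g,h]=\sum a_{ik}x_i\overline x_k$ with $K(x,y)=\sum a_{ik}x^iy^k$; here the factor $i$ makes the form Hermitian, and the symmetries $g^{*}=g$, $h^{*}=h$ make the numerator vanish at $y=1/x$, so that $K$ is a genuine polynomial of degree $\le n-1$ in each variable. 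Denoting by $\Phi$ the associated sesquilinear form, the decisive observation is that for the Vandermonde vectors $u_a=(1,a,a^2,\dots,a^{n-1})$ one has $\Phi(u_a,u_b)=K(a,\overline b)$.

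First I would evaluate these quantities as $a,b$ run through the roots of $g$. Since $g(a)=0$ and $\overline g(\overline b)=\overline{g(b)}=0$ for any roots $a,b$ of $g$, the numerator of $K(a,\overline b)$ vanishes identically; hence $\Phi(u_a,u_b)=0$ whenever the denominator $1-a\overline b$ is non-zero. For two distinct roots on the circle this denominator is $1-a/b\neq0$, so the off-diagonal Gram entries vanish and the Gram matrix of $\Phi$ in the Vandermonde basis is diagonal. Its diagonal entries are the $0/0$ limits $K(a,\overline a)$, and one application of L'Hospital's rule in the second variable gives, for a simple root $\varepsilon_s=e^{i\varphi_s}$ on the circle,
\begin{equation*}
c_s:=K(\varepsilon_s,\overline\varepsilon_s)=i\,\frac{\overline{g'(\varepsilon_s)}\,h(\varepsilon_s)}{\varepsilon_s}=-|g'(\varepsilon_s)|^2\,\frac{H(\varphi_s)}{G'(\varphi_s)},
\end{equation*}
where I used $g_\delta(\varepsilon_s)=-\varepsilon_s g'(\varepsilon_s)$ together with formula~$(\mathrm{A})$ to recognise the ratio $H/G'$ of the remark preceding the theorem. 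In particular every $c_s$ is real, as it must be.

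With this in hand both implications follow quickly. If $g$ has a root $a$ that is either off the circle or of multiplicity $\ge2$, the same numerator argument forces $K(a,\overline a)=0$ (the denominator $1-|a|^2$ is non-zero in the first case, while in the second the numerator $-\overline g(y)h(a)$ vanishes to order $\ge2$ at $y=\overline a$ against the simple zero of $1-ay$). Thus $\Phi(u_a,u_a)=0$ for the non-zero vector $u_a$, which is impossible for a definite form; so sign-definiteness of $\mathfrak{K}[g,h]$ already compels all roots of $g$ to be simple and to lie on $|x|=1$. Once this is known, the $\{u_s\}$ form a basis, the Gram matrix is $\diag(c_1,\dots,c_n)$, and by Sylvester's law of inertia the signature of $\mathfrak{K}[g,h]$ is exactly the sign pattern of $(c_1,\dots,c_n)$. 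Hence $\mathfrak{K}[g,h]$ is sign-definite iff all $c_s$ are non-zero and of one sign, i.e.\ (by the displayed formula) iff $H(\varphi)/G'(\varphi)=i\,h(\varphi)/g_\delta(\varphi)$ has one strict sign at every root of $g$. By the remark quoted before the theorem this is precisely the statement that the roots of $h$ interlace those of $g$; the non-vanishing of the $c_s$ encodes $h(\varepsilon_s)\neq0$, i.e.\ that $g$ and $h$ have no common root. Conversely, interlacing makes $g$ have $n$ simple roots on the circle with constant-sign ratio and $h(\varepsilon_s)\neq0$, whence all $c_s$ share one sign and $\mathfrak{K}[g,h]$ is definite. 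This settles both directions.

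The step I expect to require the most care is the residue computation of the diagonal entries $c_s$ and the verification that the off-diagonal entries vanish \emph{exactly}, not merely generically: everything rests on the twin facts $g(\varepsilon_s)=0$ and $\overline g(\overline\varepsilon_s)=0$, and on tracking the order of vanishing of the numerator against the simple zero of $1-xy$ at $y=\overline a$. A secondary point to state cleanly is the passage from ``constant sign of the $c_s$'' to interlacing in the sense of the definition — that $H$ alternating in sign at the $n$ consecutive circle-roots of $g$ forces exactly one simple zero of $h$ on each arc — but this is supplied by the remark preceding the theorem and needs only $\deg h=\deg g=n$.
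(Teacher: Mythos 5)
Your proposal is correct and follows essentially the paper's own route: evaluating the form at Vandermonde vectors built from the roots of $g$ (the paper's formulas (7)--(9) and (B)), noting the vanishing of the off-diagonal entries for distinct roots, computing the diagonal entries by L'H\^opital's rule to identify them with $-i\,h(\alpha_k)\overline{g_{\delta}(\alpha_k)}$, and appealing to the remark that interlacing is equivalent to constancy of the sign of $i\,h/g_{\delta}$ at the roots of $g$. The only difference is cosmetic: where the paper writes the explicit interpolation identity (10) to expand the form into squares for the necessity direction, you package both directions at once via the diagonal Gram matrix and Sylvester's law of inertia --- the same diagonalization in modern dress.
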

\begin{proof}
We first prove the sufficiency.

If the form $\mathfrak{K}$ is of a definite sign, then the expression
\begin{equation}\label{(7)}
\mathfrak{K}[g,h;1,\alpha,\ldots,\alpha^{n-1}]=i\dfrac{g(\alpha)\overline{h(\alpha)}-\overline{g(\alpha)}h(\alpha)}{1-\alpha\overline{\alpha}}
\end{equation}
for all $\alpha$, $|\alpha|\neq1$ preserves its sign. Therefore, $g(\alpha)\neq0$ for $|\alpha|\neq1$ that follows from the right hand side of~\eqref{(7)}.

Thus, $g(x)$ has all its roots lying on the circle $|x|=1$.

On the other hand, if $|\alpha|=1$, then it is easy to see that
\begin{equation}\label{(8)}
\mathfrak{K}[g,h;1,\alpha,\ldots,\alpha^{n-1}]=-i\dfrac{g(\alpha)\overline{h'(\alpha)}-\overline{g'(\alpha)}h(\alpha)}{\alpha},
\end{equation}
since $\overline{\alpha}=\dfrac1{\alpha}$ in this case, so by L'H\^opital's rule,
\begin{equation*}
\lim_{y\to\tfrac1x}\left[i\dfrac{g(x)\overline{h}(y)-\overline{g}(y)h(x)}{1-xy}\right]=-i\dfrac{g(x)\overline{h'}\left(\dfrac1x\right)-\overline{g'}\left(\dfrac1x\right)h(x)}{x}.
\end{equation*}

The expression~\eqref{(8)} can also be transformed as follows
\begin{equation}\label{(9)}
\begin{array}{c}
\mathfrak{K}[g,h;1,\alpha,\ldots,\alpha^{n-1}]=-i[g(\alpha)\overline{\alpha h'(\alpha)}-h(\alpha)\overline{\alpha g'(\alpha)}]=\\
\\
=i[g(\alpha)\overline{h_{\delta}(\alpha)}-h(\alpha)\overline{g_{\delta}(\alpha)}],
\end{array}
\end{equation}
since
\begin{equation*}
\dfrac{n}2[h(\alpha)\overline{g(\alpha)}-g(\alpha)\overline{h(\alpha)}]=\dfrac{n}2\alpha^{-n}[h(\alpha)g(\alpha)-g(\alpha)h(\alpha)]=0.
\end{equation*}
Inasmuch as $\mathfrak{K}$ does not change its sign, from~\eqref{(8)} it is follows that $g(\alpha)$ and $g'(\alpha)$ are non-zero simultaneously, that is,
all the roots of $g(x)$ are simple. On the other hand, putting in~\eqref{(9)} $\alpha=\alpha_k$ $(k=1,2,\ldots,n)$, where $\alpha_k$ is a root
of $g(x)$, we find that
$$
\mathfrak{K}[g,h;1,\alpha,\ldots,\alpha^{n-1}]=-i h(\alpha_k)\overline{g_{\delta}(\alpha_k)}=
-i\dfrac{h(\alpha_k)}{g_{\delta}(\alpha_k)}g_{\delta}(\alpha_k)\overline{g_{\delta}(\alpha_k)}.\eqno{(\mathrm{B})}
$$
From this formula we infer that the expression $i\dfrac{h(\alpha_k)}{g_{\delta}(\alpha_k)}$ has the same sign
for all $\alpha_k$. According to the remark before Theorem~\ref{Theorem.4} this implies the sufficiency of the statement
of Theorem~\ref{Theorem.4}.

We now prove the necessity. Let the roots of $g(x)$ and $h(x)$ be interlacing. Let $\alpha_1$, $\alpha_2$, \ldots, $\alpha_n$ $(|\alpha_i|=1)$ be
all distinct roots of $g(x)$. We denote by
\begin{equation*}
\mathfrak{K}\left[g,h;\begin{smallmatrix}x_0,x_1,\ldots,x_{n-1}\\y_0,y_1,\ldots,y_{n-1}\end{smallmatrix}\right]
\end{equation*}
the bilinear form $\sum a_{ik}x_k\overline{y}_k$ corresponding to the Hermitian form
\begin{equation*}
\sum_{i,k=0}^{n-1}a_{ik}x_i\overline{x}_k=\mathfrak{K}[g,h;x_0,x_1,\ldots,x_{n-1}].
\end{equation*}
It is easy to see that
\begin{equation*}
\mathfrak{K}\left[g,h;\begin{smallmatrix}1,\alpha_k,\ldots,\alpha_k^{n-1}\\1,\alpha_l,\ldots,\alpha_l^{n-1}\end{smallmatrix}\right]=0
\end{equation*}
for $k\neq l$. Therefore,
\begin{equation}\label{(10)}
\displaystyle\mathfrak{K}\left[g,h;\begin{smallmatrix}x_0,x_1,\ldots,x_{n-1}\\y_0,y_1,\ldots,y_{n-1}\end{smallmatrix}\right]=
\sum_{k=1}^{n}\dfrac{\mathfrak{K}\left[g,h;\begin{smallmatrix}x_0,x_1,\ldots,x_{n-1}\\1,\alpha_k,\ldots,\alpha_k^{n-1}\end{smallmatrix}\right]
\mathfrak{K}\left[g,h;\begin{smallmatrix}1,\alpha_k,\ldots,\alpha_k^{n-1}\\y_0,y_1,\ldots,y_{n-1}\end{smallmatrix}\right]}
{\mathfrak{K}[g,h;1,\alpha_k,\ldots,\alpha_k^{n-1}]},
\end{equation}
since the left hand side identically w.r.t. $x$ equals the right hand side for $n$ independent systems of particular values of $y$, namely, for
the following systems of values
\begin{equation*}
y_0=1,y_1=\alpha_k,\ldots,\alpha_k^{n-1},\qquad(k=1,2,\ldots,n).
\end{equation*}
The identity~\eqref{(10)} for $y_i=x_i$ $(i=0,1,\ldots,n-1)$ gives us an expansion of our form into the sum of squared terms with coefficients, which
are of the same sign by $(\mathrm{B})$ and by the conditions that $g$ and $h$ satisfy. Theorem is proved.
\end{proof}

Let us now establish the following curious statement.
\begin{theorem}\label{Theorem.5}
If the roots of symmetric polynomials $g(x)$ and $h(x)$ are interlacing, then the roots of the polynomials $g_{\delta}(x)$ and $h_{\delta}(x)$
interlace, as well.
\end{theorem}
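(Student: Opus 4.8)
The plan is to reduce everything to the real angular functions and then run a Hermite--Biehler-type argument built from linear combinations. First I would pass to $G(\varphi)=e^{-ni\varphi/2}g(e^{i\varphi})$ and $H(\varphi)=e^{-ni\varphi/2}h(e^{i\varphi})$. Since $g$ and $h$ are symmetric with all roots on $|x|=1$, one checks (using $g^{*}=g$) that $G$ and $H$ are real-valued; the roots of $g,h$ correspond to the zeros of $G,H$, while by formula $(\mathrm{A})$ the roots of $g_\delta,h_\delta$ correspond to the zeros of $G',H'$. Thus Theorem~\ref{Theorem.5} is the statement that two real functions of this special type with interlacing zeros have derivatives with interlacing zeros.

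The engine I would use is the following equivalence for symmetric polynomials $p,q$ of degree $n$: \emph{$p$ and $q$ interlace if and only if every nonzero real combination $\alpha p+\beta q$ has all $n$ of its roots simple and on $|x|=1$.} Granting this, the theorem follows in three steps. (i) Applying the forward direction to $g,h$ shows every $\alpha g+\beta h$ has $n$ simple roots on the circle. (ii) Since $\delta$ is linear and all these combinations share the degree $n$, we have $(\alpha g+\beta h)_\delta=\alpha g_\delta+\beta h_\delta$; because $\alpha g+\beta h$ is symmetric with simple roots on the circle, the remark preceding Theorem~\ref{Theorem.4} (Rolle applied to $(\mathrm{A})$) gives that $\alpha g_\delta+\beta h_\delta$ again has $n$ simple roots on the circle, for every real $(\alpha,\beta)\neq(0,0)$. (iii) Applying the converse direction to the pair $g_\delta,h_\delta$ (note that $ig_\delta,ih_\delta$ are genuinely symmetric, so the equivalence applies) yields that $g_\delta$ and $h_\delta$ interlace, which is the assertion.

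It remains to prove the equivalence. For the forward direction I would invoke the sign characterization stated just before Theorem~\ref{Theorem.4}: interlacing means $H$ changes sign at consecutive zeros of $G$, so for $\beta\neq0$ the combination $\alpha G+\beta H$ reduces to $\beta H$ at the zeros of $G$ and hence alternates in sign there, producing a zero in each of the $n$ cyclic gaps between consecutive zeros of $G$; since $\alpha g+\beta h$ has degree at most $n$, these are all of its roots, all simple and on the circle. The converse is the real obstacle: given that all combinations have simple roots on the circle, I would form the planar curve $(\,P(\varphi),Q(\varphi)\,)$ built from the angular functions of the pair (this curve is nonvanishing, since a common zero of $P$ and $Q$ would itself force a multiple root in some combination) and show its argument $\Theta(\varphi)$ is strictly monotone. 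If $\Theta'(\varphi_0)=0$ at some point, then $(P,Q)$ and $(P',Q')$ are parallel at $\varphi_0$, so the combination whose coefficient vector is orthogonal to $(P(\varphi_0),Q(\varphi_0))$ vanishes together with its derivative at $\varphi_0$, i.e.\ acquires a multiple root on the circle---contradicting simplicity. Strict monotonicity of $\Theta$ is precisely the interlacing of the zeros of $P$ and $Q$.

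I would flag two bookkeeping points as the places that need care rather than new ideas: the reality of the angular functions, and the fact that for odd $n$ the functions $G,H$ are antiperiodic with period $2\pi$, so the cyclic sign-alternation must be read over one full turn of $|x|=1$, where the sign flip upon return is exactly accounted for by this antiperiodicity; and the verification that no combination drops in degree, which follows because interlacing forces the leading coefficients of $g$ and $h$ to be $\mathbb{R}$-linearly independent. The genuinely analytic step---the only one not already packaged in the remark before Theorem~\ref{Theorem.4}---is the no-tangency (monotone-argument) argument in the converse, and that is where I expect the main difficulty to lie.
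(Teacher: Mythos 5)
Your proof is correct, but it takes a genuinely different route from the paper's. Krein's own proof is a short corollary of Theorem~\ref{Theorem.4}: since $g$ and $h$ interlace, the necessity part of that theorem shows that the expression \eqref{(11)}, $i[g(\alpha)\overline{h_{\delta}(\alpha)}-h(\alpha)\overline{g_{\delta}(\alpha)}]$, is real and of constant sign on $|x|=1$; he then identifies it with the Wronskian $G(\varphi)H'(\varphi)-G'(\varphi)H(\varphi)$ of the angular functions (formula \eqref{(12)}), evaluates it at two consecutive zeros $\varphi_k,\varphi_l$ of $G'$ (where it collapses to $GH'$), and argues: $G(\varphi_k)$ and $G(\varphi_l)$ have opposite signs, hence so do $H'(\varphi_k)$ and $H'(\varphi_l)$, so $H'$ vanishes in between; equality of the number of roots of $g_{\delta}$ and $h_{\delta}$ finishes the count. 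You instead bypass the form $\mathfrak{K}$ and Theorem~\ref{Theorem.4} entirely: you prove a pencil characterization of Hermite--Kakeya--Obreschkoff type (interlacing iff every nonzero real combination $\alpha g+\beta h$ has $n$ simple roots on the circle), push that property through $\delta$ using its linearity and the Rolle remark, and then recover interlacing of $g_{\delta},h_{\delta}$ from the converse direction via strict monotonicity of the argument of the curve $(G,H)$. The two arguments ultimately rest on the same invariant --- your $\Theta'$ equals $(GH'-G'H)/(G^2+H^2)$, i.e.\ the constant-sign Wronskian \eqref{(12)}, and your no-tangency/winding statement is essentially the content of Theorem~\ref{Theorem.7} --- but the logical organization differs. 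What each buys: Krein's proof is a few lines once Theorem~\ref{Theorem.4} is in hand; yours is independent of the quadratic-form machinery, at the price of proving the pencil equivalence in both directions, including the degree-drop and (anti)periodicity bookkeeping for odd $n$ that you correctly flag (the cleanest way to exclude degree drop is to note that a dropped-degree combination would vanish at $x=0$ while the sign alternation already forces $n$ roots on the circle). As a side benefit, your pencil lemma yields Theorem~\ref{Theorem.6} essentially for free, so the extra work is not wasted.
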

\begin{proof}
In fact, if the roots of the polynomials $g(x)$ and $h(x)$ are interlacing, then as we proved above, the expression
\begin{equation}\label{(11)}
i[g(\alpha)\overline{h_{\delta}(\alpha)}-h(\alpha)\overline{g_{\delta}(\alpha)}]
\end{equation}
is real and does not change its sign on the circle $x=e^{i\varphi}$.

Let us consider the functions
\begin{equation*}
H(\varphi)=e^{-\tfrac{ni\varphi}2}h\left(e^{i\varphi}\right)\quad\text{and}\quad G(\varphi)=e^{-\tfrac{ni\varphi}2}g\left(e^{i\varphi}\right).
\end{equation*}
Since the polynomials $g_{\delta}(x)$ and $h_{\delta}(x)$ are skew-symmetric, then we have
\begin{equation*}
H'(\varphi)=-ie^{-\tfrac{ni\varphi}2}h_{\delta}\left(e^{i\varphi}\right)=ie^{\tfrac{ni\varphi}2}\overline{h}_{\delta}\left(e^{-i\varphi}\right),
\end{equation*}
\begin{equation*}
G'(\varphi)=-ie^{-\tfrac{ni\varphi}2}g_{\delta}\left(e^{i\varphi}\right)=ie^{\tfrac{ni\varphi}2}\overline{g}_{\delta}\left(e^{-i\varphi}\right).
\end{equation*}
Consequently, the expression~\eqref{(11)} equals the following
\begin{equation}\label{(12)}
G(\varphi)H'(\varphi)-G'(\varphi)H(\varphi).
\end{equation}
Let now $\varphi_k$ and $\varphi_l$ be the arguments of two consecutive roots of $G'(\varphi)$. Putting into~\eqref{(12)} $\varphi=\varphi_k$ and
then $\varphi=\varphi_l$, we obtain that the expressions
\begin{equation*}
G(\varphi_k)H'(\varphi_k)\quad\text{and}\quad G(\varphi_l)H'(\varphi_l)
\end{equation*}
are of the same sign. But $G(\varphi_k)$ and $G(\varphi_l)$ obviously have different signs, consequently, $H'(\varphi_k)$
and $H'(\varphi_l)$ have different signs, as well. Therefore, between $\varphi_k$ and $\varphi_l$ there lies at least one root
of $H'(\varphi)$. Now the statement of the theorem follows from the fact that $G'(\varphi)$ and $H'(\varphi)$ have equal number
of roots.
\end{proof}

If the polynomials $g(x)$ and $h(x)$ are of an even degree, $n=2m$, then $G(\varphi)$ and $H(\varphi)$
are trigonometric sums of the form
\begin{equation*}
\text{I.}\quad a_0+\sum_{k=1}^{m}(a_k\cos k\varphi+b_k\sin k\varphi)
\end{equation*}

If the polynomials $g(x)$ and $h(x)$ are of an odd degree, $n=2m-1$, then $G(\varphi)$ and $H(\varphi)$
are trigonometric sums of the form
\begin{equation*}
\text{II.}\quad \sum_{k=1}^{m}\left[a_k\cos\left(k-\dfrac12\right)\varphi+b_k\sin\left(k-\dfrac12\right)\varphi\right].
\end{equation*}

Thus, our theorem can also be formulated as follows.

\textit{If roots of two trigonometric sums of type I or II are interlacing, then the roots of their derivatives are interlacing, as well.}

Also it is easy to prove the following theorem.
\begin{theorem}\label{Theorem.6}
If symmetric polynomials $g(x)$ and $h(x)$ have interlacing roots, then the roots of the symmetric polynomial
\begin{equation*}
f(x)=g(x)+th(x)\qquad(-\infty<t<\infty)
\end{equation*}
interlace the roots of the polynomials $g(x)$ and $h(x)$. The arguments of roots of $f(x)$ are monotone functions of parameter $t$.
\end{theorem}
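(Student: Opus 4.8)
The plan is to transfer everything to the $2\pi$-periodic functions used throughout this section. Since a real linear combination of symmetric polynomials is again symmetric, $f=g+th$ is symmetric, and with $x=e^{i\varphi}$ its associated function is
\begin{equation*}
F(\varphi)=e^{-\tfrac{ni\varphi}2}f\left(e^{i\varphi}\right)=G(\varphi)+tH(\varphi),
\end{equation*}
a real trigonometric sum of the same type (I or II) as $G$ and $H$. By hypothesis $G$ and $H$ have the same degree and strictly interlacing zeros; equivalently, as in the proof of Theorem~\ref{Theorem.5}, their Wronskian
\begin{equation*}
W(\varphi)=G(\varphi)H'(\varphi)-G'(\varphi)H(\varphi)
\end{equation*}
is real and of constant nonzero sign (this is precisely the expression~\eqref{(12)}).

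First I would prove that the roots of $f$ interlace those of $g$. Let $\varphi_1<\ldots<\varphi_n<\varphi_1+2\pi$ be the zeros of $G$ in one period, i.e. the arguments of the simple unimodular roots of $g$. At each of them $F(\varphi_k)=tH(\varphi_k)$, and since interlacing makes $H(\varphi_k)$ alternate in sign, $F$ changes sign across each of the $n$ cyclic gaps and hence has at least $n$ zeros per period. If the leading coefficient $a_n+tb_n$ of $f$ vanished, then the constant coefficient $a_0+tb_0=\overline{a_n+tb_n}$ would vanish too, so $f$ would have degree at most $n-2$ and at most $n-2$ roots, contradicting the count just made; therefore $f$ has degree $n$, at most $n$ roots, and thus exactly one simple zero in each gap, lying on the circle. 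This is the interlacing of the roots of $f$ and $g$, and exchanging the roles of $G$ and $H$ (using that $G$ alternates in sign at the zeros of $H$) gives interlacing with $h$.

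For the monotonicity I would differentiate the relation $G(\varphi)+tH(\varphi)=0$ along a branch $\varphi=\varphi(t)$ of roots, which is legitimate since the roots are simple, so $F'(\varphi)\neq0$. This gives
\begin{equation*}
\frac{d\varphi}{dt}=-\frac{H(\varphi)}{G'(\varphi)+tH'(\varphi)}=-\frac{H(\varphi)}{F'(\varphi)}.
\end{equation*}
The key simplification is the elementary Wronskian identity
\begin{equation*}
G(\varphi)F'(\varphi)-G'(\varphi)F(\varphi)=t\left[G(\varphi)H'(\varphi)-G'(\varphi)H(\varphi)\right]=tW(\varphi),
\end{equation*}
which, evaluated at a root of $F$ where $F(\varphi)=0$ and $G(\varphi)=-tH(\varphi)$, yields $H(\varphi)F'(\varphi)=-W(\varphi)$, whence
\begin{equation*}
\frac{d\varphi}{dt}=\frac{H(\varphi)^2}{W(\varphi)}.
\end{equation*}
Since $W$ keeps a constant sign and $H$ cannot vanish at a root of $F$ (that would force a common zero of $G$ and $H$), the derivative has constant nonzero sign, so each root argument is a strictly monotone function of $t$, sweeping from a root of $g$ (at $t=0$) toward a root of $h$ (as $t\to\pm\infty$).

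The step requiring the most care is the degree bookkeeping of the second paragraph: a priori the leading coefficient $a_n+tb_n$ could vanish for some real $t$, letting a pair of roots escape toward $0$ and $\infty$ and leaving $F$ with too few zeros. The sign-counting is exactly what forbids this, so that interlacing itself confines the whole pencil to the circle; everything else reduces to the two routine verifications that $F$ is genuinely of type I or II and that the counts against $G$ and against $H$ are mutually compatible, both immediate from $F=G+tH$.
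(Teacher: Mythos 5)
Your proof is correct, but it reaches the first (interlacing) statement by a genuinely different route than the paper. The paper's argument for that part is purely algebraic and rests on Theorem~\ref{Theorem.4}: by bilinearity, $\mathfrak{K}[f,h]=\mathfrak{K}[g,h]+t\,\mathfrak{K}[h,h]=\mathfrak{K}[g,h]$ and $\mathfrak{K}[f,g]=\mathfrak{K}[g,g]+t\,\mathfrak{K}[h,g]=-t\,\mathfrak{K}[g,h]$, so the sign-definiteness guaranteed by the interlacing of $g$ and $h$ transfers at once to the pairs $(f,h)$ and $(f,g)$, and Theorem~\ref{Theorem.4}, applied now in the converse direction, finishes the proof; in particular, the fact that $f$ has $n$ simple roots on the circle comes for free from the criterion, with no separate bookkeeping. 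Your sign-change count of $F=G+tH$ at the zeros of $G$ is more elementary and bypasses Theorem~\ref{Theorem.4} entirely for this part, at the price of having to rule out a drop in degree, which you handle correctly via the symmetry $a_0+tb_0=\overline{a_n+tb_n}$ --- exactly the step such a direct argument cannot skip. For the monotonicity your computation is essentially the paper's: differentiating $f(\alpha)=0$ in $t$, the paper gets $-\tfrac{d\varphi}{dt}=i\,h(\alpha)/f_{\delta}(\alpha)$ and concludes from the just-established interlacing of $f$ and $h$ via the remark preceding Theorem~\ref{Theorem.4}; your $\tfrac{d\varphi}{dt}=-H/F'$ is the same quantity (by formula $(\mathrm{A})$, $-H/F'=-i\,h/f_{\delta}$ on the circle), but closing with $\tfrac{d\varphi}{dt}=H^2/W$, where $W=GH'-G'H$ is the $t$-independent Wronskian of expression~\eqref{(12)} whose constant nonzero sign you rightly import from the proof of Theorem~\ref{Theorem.5}, makes the sign manifestly the same for all $t$ and all root branches --- a small but genuine sharpening of the paper's terse final sentence, which fixes the sign for each $t$ separately and leaves the uniformity in $t$ to continuity.
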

\begin{proof}
The first part of the statement of the theorem follows from the formul\ae
\begin{equation*}
\begin{array}{l}
\mathfrak{K}[f,h;x_0,x_1,\ldots,x_{n-1}]=\quad\mathfrak{K}[g,h;x_0,x_1,\ldots,x_{n-1}],\\
\mathfrak{K}[f,g;x_0,x_1,\ldots,x_{n-1}]=-t\mathfrak{K}[g,h;x_0,x_1,\ldots,x_{n-1}].
\end{array}
\end{equation*}

To prove the second part of the statement we consider a root $\alpha$ of the function $f$ and differentiate by $t$
the left hand side of the equation
\begin{equation*}
f(\alpha)=g(\alpha)+th(\alpha)=0,
\end{equation*}
considering $\alpha$ as a function of $\varphi$ $(\alpha=e^{i\varphi})$, which, in its turn, depends on $t$. Then we obtain
\begin{equation*}
i\alpha f'(\alpha)\dfrac{d\varphi}{dt}=-h(\alpha).
\end{equation*}
Taking into account that $f(\alpha)=0$, this implies that
\begin{equation*}
-\dfrac{d\varphi}{dt}=i\dfrac{h(\alpha)}{f_{\delta}(\alpha)}.
\end{equation*}
Since the roots of $f(x)$ and $h(x)$ are interlacing, the right hand side of the last equality does not change its sign, as required.
\end{proof}

Concluding this Section we note that the interlacing criterion has a function-theoretical interpretation.
Namely, the following theorem holds.
\begin{theorem}\label{Theorem.7}
The roots of two symmetric polynomials $g(x)$ and $h(x)$ are interlacing if and only if the function
\begin{equation*}
z=\dfrac{h(x)}{g(x)}
\end{equation*}
maps the disk $|x|<1$ into one of two $n$-sheet half-planes, $\Im z>0$ or $\Im z<0$.
\end{theorem}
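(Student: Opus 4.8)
The plan is to reduce this geometric statement to the sign–behaviour of the form $\mathfrak{K}[g,h]$ from Theorem~\ref{Theorem.4}, via its values on the special vectors $(1,\alpha,\ldots,\alpha^{n-1})$. First I would record the computation behind~\eqref{(7)}: writing $z=h(\alpha)/g(\alpha)$ and using $h\overline g=\overline{g\overline h}$ one obtains, for $|\alpha|\neq1$,
\[
\mathfrak{K}[g,h;1,\alpha,\ldots,\alpha^{n-1}]=i\,\frac{g(\alpha)\overline{h(\alpha)}-\overline{g(\alpha)}h(\alpha)}{1-|\alpha|^2}=\frac{2|g(\alpha)|^2}{1-|\alpha|^2}\,\Im\frac{h(\alpha)}{g(\alpha)},
\]
so that for $|\alpha|<1$ the sign of the form on $(1,\alpha,\ldots,\alpha^{n-1})$ is exactly the sign of $\Im z$ at $\alpha$. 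I would also use the reflection identity $z(1/\overline x)=\overline{z(x)}$, immediate from $g^{*}=g$ and $h^{*}=h$: it interchanges $|x|<1$ with its exterior and the half–planes $\Im z\gtrless0$ with each other, and carries $|x|=1$ onto $\mathbb R\cup\{\infty\}$ (consistent with $z=H(\varphi)/G(\varphi)$ being real on the circle, as already noted in Section~\ref{section.2}).

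\emph{Necessity.} If the roots interlace, Theorem~\ref{Theorem.4} makes $\mathfrak{K}[g,h]$ sign–definite, say positive definite. Evaluating on the nonzero vectors $(1,\alpha,\ldots,\alpha^{n-1})$ and using the displayed identity I get $\Im z>0$ and $g(\alpha)\neq0$ throughout $|x|<1$; hence $z=h/g$ is holomorphic on the disk and maps it into the open half–plane $\Im z>0$. By the reflection identity the exterior is sent into $\Im z<0$ and the circle onto $\mathbb R\cup\{\infty\}$. Since $g,h$ are coprime of degree $n$, the map $z$ has degree $n$ on the sphere, so each $z_0$ with $\Im z_0>0$ has exactly $n$ preimages; none lies on or outside the circle, so all $n$ lie in $|x|<1$, and $z$ maps the disk onto the $n$–sheeted half–plane $\Im z>0$ (onto $\Im z<0$ in the negative–definite case).

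\emph{Sufficiency.} Conversely, suppose $z=h/g$ maps $|x|<1$ onto one of the two $n$–sheeted half–planes, say $\Im z>0$. As $z$ is open, its image is an open subset of $\{\Im z\ge0\}$, hence of $\{\Im z>0\}$, so $\Im(h/g)>0$ and $g\neq0$ on the disk, and by the identity $\mathfrak{K}[g,h;1,\alpha,\ldots,\alpha^{n-1}]>0$ for every $|\alpha|<1$. Since $g$ is symmetric with no zero in the disk, all its $n$ zeros lie on $|x|=1$; likewise for $h$, a zero of which inside the disk would give the real value $z=0$ there. Finally, the restriction of an $n$–sheeted cover to the boundary is a degree–$n$ map of the circle onto $\mathbb R\cup\{\infty\}$, and since the interior stays on one fixed side, $z=H(\varphi)/G(\varphi)$ is monotone between consecutive poles; with $n$ poles (the zeros of $g$) it sweeps $\mathbb R$ once on each gap, producing exactly one zero of $h$ there, all roots simple. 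This is precisely the interlacing of the roots of $g$ and $h$.

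The hard part will be the passage between ``sign–definite form'' and ``$n$–sheeted cover''. In the necessity part the delicate point is ensuring that all $n$ sphere–preimages of a point of $\{\Im z>0\}$ fall inside the disk, which the reflection identity together with the degree count handles cleanly. In the sufficiency part the honest obstacle is the monotonicity of the boundary map—that the interior lying on one side of $\mathbb R\cup\{\infty\}$ forces $H/G$ to be monotone between consecutive poles; equivalently, one must upgrade the positivity of $\mathfrak{K}[g,h]$ on the Vandermonde vectors to genuine positive definiteness before invoking Theorem~\ref{Theorem.4}. Either route requires care at the boundary circle and at the branch points, and uses the tacit normalization $\deg g=\deg h=n$ with $\gcd(g,h)=1$ that the $n$–sheeted hypothesis forces.
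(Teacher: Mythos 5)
Your necessity half is correct and takes a genuinely different route from the paper: the identity behind~\eqref{(7)}, the reflection formula $z(1/\overline{x})=\overline{z(x)}$, and the degree count on the sphere do yield that interlacing implies ($\mathfrak{K}$ definite, by Theorem~\ref{Theorem.4}, hence) an $n$-sheeted covering of a half-plane. The genuine gap is in your sufficiency half, and you flag it yourself without closing it. The monotonicity of $H(\varphi)/G(\varphi)$ between consecutive poles is asserted, not proved; and the alternative you name --- upgrading positivity of $\mathfrak{K}$ on the Vandermonde vectors $(1,\alpha,\ldots,\alpha^{n-1})$, $|\alpha|<1$, to positive definiteness --- is not a routine linear-algebra step: a Hermitian form can be strictly positive on that whole two-real-parameter (spanning) family and still be indefinite; for $n=2$ the form $|x_0|^2-\tfrac12|x_1|^2$ is positive on every vector $(1,\alpha)$ with $|\alpha|\leqslant1$, yet has signature $(1,1)$. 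So Theorem~\ref{Theorem.4} cannot be invoked from what you have established. (Making the kernel structure do this work is possible --- it is essentially the Carath\'eodory--Toeplitz positivity the paper alludes to after Theorem~\ref{Theorem.7} --- but that is real machinery, not a remark. The boundary route additionally needs the poles of $h/g$ on $|x|=1$ to be simple, since a double pole on the circle already throws values with $\Im z<0$ into the disk; you never address this.)

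The paper closes exactly this hole with one identity, and you should adopt it: for $z=\xi+i\eta$, $\eta\neq0$, put $f(x)=h(x)-zg(x)$; then $f^{*}(x)=h(x)-\overline{z}g(x)$, and a direct computation gives $\mathfrak{H}[f;x_0,\ldots,x_{n-1}]=-2\eta\,\mathfrak{K}[g,h;x_0,\ldots,x_{n-1}]$. Now fix one $z_0$ with $\Im z_0>0$. The $n$-sheeted hypothesis forces $\gcd(g,h)=1$ and puts all $n$ roots of $f=h-z_0g$ inside the disk; moreover $\gcd(f,f^{*})=1$, since a common root of $f$ and $f^{*}$ would be a common root of $g$ and $h$. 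Schur--Cohn's theorem then says $\mathfrak{H}[f]$ is definite of full rank $n$ --- the sheet count is converted into a signature count, which is precisely the ``upgrade'' your argument is missing --- whence $\mathfrak{K}[g,h]$ is sign-definite and Theorem~\ref{Theorem.4} gives the interlacing. Read in the other direction, the same identity also delivers your necessity half in three lines (definite $\mathfrak{K}$ $\Rightarrow$ definite $\mathfrak{H}[h-zg]$ for every non-real $z$ $\Rightarrow$ all $n$ roots of $h(x)=zg(x)$ on one side of the circle), which is why the paper's proof is a one-paragraph affair; your degree-theoretic necessity argument is a legitimate alternative, but it buys nothing that the identity does not already give.
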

\begin{proof}
Indeed, let $z=\xi+i\eta$ $(\eta\neq0)$. We consider the function
\begin{equation*}
f(x)=h(x)-zg(x).
\end{equation*}
Evidently,
\begin{equation*}
f^{*}(x)=h(x)-\overline{z}g(x).
\end{equation*}
Easily, this implies that Schur-Cohn's form constructed for $f$ equals the following
\begin{equation*}
\mathfrak{H}[f;x_0,x_1,\ldots,x_{n-1}]=-2\eta\mathfrak{K}[g,h;x_0,x_1,\ldots,x_{n-1}].
\end{equation*}
Consequently, for the roots of $g(x)$ and $h(x)$ to be interlacing it is necessary and sufficient the form $\mathfrak{H}$ to be sign-definite.
In this case, by Schur-Cone's theorem, all the roots of $f(x)$ lie either inside the unit circle (if $\mathfrak{H}$ is positive definite) or
outside the unit circle (if $\mathfrak{H}$ is negative definite). This fact establishes the statement of the theorem.
\end{proof}

The last theorem shows that there is a close connection between the theory of symmetric polynomials and the well-known Carath\'eodory's
problem on positive harmonic functions~\cite{Caratheodory.1,{Caratheodory.2}}.

Note also that if we expand $\dfrac{ih(x)}{g(x)}$ into the series
\begin{equation*}
\dfrac{ih(x)}{g(x)}=\sigma_0+i\tau_0+\sum_{i=1}^{+\infty}\sigma_ix^i\qquad (\sigma_0\lessgtr0,\tau_0\lessgtr0),
\end{equation*}
then, analogously to what we do in Section~\ref{section.1}, instead of the form $\mathfrak{K}$ one can consider the form
\begin{equation*}
\mathfrak{H}(g,h;x_0,x_1,\ldots,x_{n-1})=\sum_{i,k=0}^{n-1}\sigma_{i-k}x_i\overline{x}_k\qquad (\sigma_{-i}=\overline{\sigma}_i),
\end{equation*}
which is connected to the form $\mathfrak{K}$ by the identity
\begin{equation*}
\mathfrak{K}[g,h;x_0,x_1,\ldots,x_{n-1}]=\sum_{i,k=0}^{n-1}\sigma_{i-k}\overline{z}_iz_k,
\end{equation*}
where
\begin{equation*}
\left.\begin{array}{c}
z_0\ \ \ =\ a_0x_0+a_1x_{1}+\ldots+a_{n-1}x_{n-1},\\
z_1\ \ \ =\ \,\quad\qquad a_1x_{0}+\ldots+a_{n-2}x_{n-1},\\
\cdots\cdots\cdots\cdots\cdots\cdots\cdots\cdots\cdots\cdots\cdots\\
z_{n-1}=\ \,\,\,\,\quad\qquad\qquad\qquad\qquad a_0x_{n-1}.
\end{array}\right\}\quad[g(x)=a_0+a_1x+\ldots+a_nx^n].
\end{equation*}
It is known that in the Carath\'eodory's problem, exactly the following Toeplitz forms are considered~\cite{Herglotz.2}\footnote{In this work, it is pointed out to the possibility
of establishing a criterion of root interlacing for two symmetric polynomials}.
\begin{equation*}
\sum\sigma_{i-k}\overline{z}_iz_k
\end{equation*}
%

\section{ }\label{section.3}

Since the transformation
\begin{equation*}
f(x)=(x-i)^ng\left(\dfrac{x+i}{x-i}\right)
\end{equation*}
transforms a symmetric polynomial $g(x)$ into a real one, there must exist some analogies between the
theory of symmetric and real polynomials.

In fact, Theorem~\ref{Theorem.1} corresponds to Borchardt's theorem~\cite{Borchardt.1,{Borchardt.2}}:

I. \textit{If the real form
\begin{equation*}
\sum_{i,k=0}^{n-1}s_{i+k}x_ix_k
\end{equation*}
constructed for a real polynomial $f(x)$ has $\pi$ positive squared terms and $\nu$ negative squared terms,
then the polynomial $f(x)$ has $\pi-\nu$ distinct real roots and $\nu$ distinct pairs of non-real roots}.

Using the technique of Section~\ref{section.1}, it is not difficult to transform this criterion into the following one.

II. \textit{If the form
\begin{equation*}
\left[\dfrac{f(x)f'(y)-f'(x)f(y)}{x-y}\right]_n=K[f;x_0,x_1,\ldots,x_{n-1}]
\end{equation*}
has $\pi$ positive squared terms and $\nu$ negative squared terms, then the polynomial $f(x)$ has $\pi-\nu$
distinct real roots and $\nu$ distinct pairs of non-real roots.}

Hermite~\cite{Hermite} suggested, instead of the form $K$, to consider the form
\begin{equation*}
K_1[f;x_0,x_1,\ldots,x_{n-1}]=\left[\dfrac{\check{f}(x)f'(y)-f'(x)\check{f}(y)}{x-y}\right]_n,
\end{equation*}
where
\begin{equation*}
\check{f}(x)=nf(x)-xf'(x).
\end{equation*}

Hermite's rule (we do not formulate it here) can be easily obtained from the rule II if
one takes into account the identity
\begin{equation*}
K[f;x_0,x_1,\ldots,x_{n-1}]=\dfrac1nK_1[f;x_0,x_1,\ldots,x_{n-1}]+\dfrac1n[f'(x)f'(y)]_n.
\end{equation*}

Theorem~\ref{Theorem.3} corresponds to the following theorem.

III. \textit{If $f(x)$ is a real polynomial, then the polynomial
\begin{equation*}
f'(x)-zf(x),
\end{equation*}
for $\Im z>0$, has as many roots in the half-plane $\Im x>0$ as many pairs of non-real roots the polynomial $f(x)$ has.}

We agree to say that roots of two real polynomials interlace if all their roots are real and distinct, and between any two
consecutive roots of one polynomial there lies one, and only one, root of the other polynomial. Then the analogue
of Theorem~\ref{Theorem.4} is the proposition that was established, in another form, by Hurwitz~\cite{Hurwitz}:

IV. \textit{Roots of two real polynomials $f$ and $F$ interlace if and only if the form
\begin{equation*}
\left[\dfrac{F(x)f(y)-F(y)f(x)}{x-y}\right]_n
\end{equation*}
is sign-definite.}

This proposition can certainly be proved by the same technique that we used in the proof of Theorem~\ref{Theorem.4}, but
one can find for it a very beautiful proof based on Sturm's theorem.

Theorem~\ref{Theorem.5} turns into Markov's theorem~\cite{MarkovV}:

V. \textit{If roots of polynomials $f(x)$ and $F(x)$ interlace, then the roots of their derivatives~$f'(x)$ and $F'(x)$ interlace, as well.}

It is obvious how one should formulate the analogue of Theorem~\ref{Theorem.6}.

Theorem~\ref{Theorem.7} turns into the following proposition.

VII. \textit{The roots of two real polynomials $f(x)$ and $F(x)$ are interlacing if, and only if, the function
\begin{equation*}
z=\dfrac{f(x)}{F(x)}
\end{equation*}
maps the half-plane $\Im x>0$ into one of two $n$-sheet half-planes, $\Im z>0$ or $\Im z<0$\footnote{Regarding Propositions III and VII, compare them with Chebotarev's work~\cite{Chebotarev}.}.}

For proving Propositions III and VII, it is better to use Hermite's theorem \footnote{See~\cite{Hermite}, pp.~41--44.}
(analogue of Schur-Cohn's theorem).

\textbf{Hermite's Theorem.} \textit{If the form
\begin{equation*}
\left[-i\dfrac{F(x)\overline{F}(y)-\overline{F}(x)F(y)}{x-y}\right]_n
\end{equation*}
has $\pi$ positive and $\nu$ negative squared terms, and if the dimension of its kernel is $d$
, then the polynomials $F(x)$ and $\overline{F}(x)$ have the greatest common divisor $D(x)$ of
degree $d$, and the polynomial $\dfrac{F(x)}{D(x)}$ has $\pi$ roots in the upper half-plane $\Im x>0$ and $\nu$ roots in  the lower half-plane $\Im x<0$.}

\vspace{5mm}

\textit{Remarks during proofread.} After this work was submitted to publication, the author got an opportunity to read the paper~\cite{Herglotz} by G.\,Herglotz,
where two forms constructed in Section~\ref{section.1} was also introduced, and Theorems~\ref{Theorem.1} and~\ref{Theorem.2} were proved. Since the technique
of the author is more elementary and completely differs from Herglotz's method which is connected to the theory of characteristics, the author allows himself
to leave Section~\ref{section.1} without any changes.


Unter Benutzung von elementaren Betrachtungen konstruiert der Verfasser zwei Formen, mit deren Hilfe die Wurzelanzahl des symmetrischen Polynoms $$f(x)=x^n\overline{f}\left(\dfrac1x\right)$$ innerhalb des Einheitskreises aufgez\"ahlt werden kann; zu diesen Formen kommt auch G.Herglotz, indem er von der Charakteristikentheorie ausgeht. Auf Grund dieser Resultate wird rein algebraisch der Satz von A.Cohn abgeleitet, nach dem das symmetrische Polynom dieselbe Anzahl von Wurzeln ausserhalb des Einheitskreises besitzt, wies eine Derivierte. Dieser Satz wird etwas verallgemeinert. Zum Schluss werden symmetrische Polynome mit sish trennenden Wurzeln betrachtet, und ein Analogon des W.Markowschen Satzes f\"ur symmetrische, also auch f\"ur trigonometrische Polynome, aufgestellt.

\end{document}